\theoremstyle{plain}
\newtheorem{thm}{Theorem}[section]
\newtheorem{prop}[thm]{Proposition}
\newtheorem{lem}[thm]{Lemma}
\newtheorem{cor}[thm]{Corollary}
\newtheorem{ques}[thm]{Question}
\newtheorem{conj}[thm]{Conjecture}
\theoremstyle{definition}
\newtheorem{dfn}[thm]{Definition}
\newtheorem{dfns-rems}[thm]{Definitions and Remarks}
\newtheorem{notas-rems}[thm]{Notations and Remarks}
\newtheorem{exmps-rems}[thm]{Examples and Remarks}
\begin{document}


\title[Stability of depth and Sdepth of symbolic powers]{Stability of depth and Stanley depth of symbolic powers of squarefree monomial ideals}


\author[S. A. Seyed Fakhari]{S. A. Seyed Fakhari}

\address{S. A. Seyed Fakhari, School of Mathematics, Statistics and Computer Science,
College of Science, University of Tehran, Tehran, Iran.}

\email{aminfakhari@ut.ac.ir}


\begin{abstract}
Let $\mathbb{K}$ be a field and $S=\mathbb{K}[x_1,\dots,x_n]$ be the
polynomial ring in $n$ variables over $\mathbb{K}$. Assume that $I\subset S$ is a squarefree monomial ideal. For every integer $k\geq 1$, we denote the $k$-th symbolic power of $I$ by $I^{(k)}$. Recently, Monta\~no and N\'u\~nez-Betancourt \cite{mn} proved that for every pair of integers $m, k\geq 1$,$${\rm depth}(S/I^{(m)})\leq {\rm depth}(S/I^{(\lceil\frac{m}{k}\rceil)}).$$We provide an alternative proof for this inequality. Moreover, we reprove the known results that the sequence $\{{\rm depth}(S/I^{(k)})\}_{k=1}^{\infty}$ is convergent and$$\min_k{\rm depth}(S/I^{(k)})=\lim_{k\rightarrow \infty}{\rm depth}(S/I^{(k)})=n-\ell_s(I),$$where $\ell_s(I)$ denotes the symbolic analytic spread of $I$. We also determine an upper bound for the index of depth stability of symbolic powers of $I$. Next, we consider the Stanley depth of symbolic powers and prove that the sequences $\{{\rm sdepth}(S/I^{(k)})\}_{k=1}^{\infty}$ and $\{{\rm sdepth}(I^{(k)})\}_{k=1}^{\infty}$ are convergent and the limit of each sequence is equal to its minimum. Furthermore, we determine an upper bound for the indices of sdepth stability of symbolic powers.
\end{abstract}


\subjclass[2000]{Primary: 13C15, 05E99; Secondary: 13C13}


\keywords{Depth, Stanley depth, Symbolic power}


\maketitle


\section{Introduction} \label{sec1}

Let $\mathbb{K}$ be a field and $S=\mathbb{K}[x_1,\dots,x_n]$ be the
polynomial ring in $n$ variables over the field $\mathbb{K}$, and let
$I\subset S$ be a monomial ideal. The {\it analytic spread} of $I$, denoted by $\ell(I)$, is defined as the Krull dimension of $\mathcal{R}(I)/
{{\frak{m}}\mathcal{R}(I)}$, where $\mathcal{R}(I)=\bigoplus_
{k=0}^{\infty}I^k$ is the {\it Rees ring} of $I$ and
$\frak{m}=(x_1,\ldots,x_n)$ is the maximal ideal of $S$. A classical result by Burch \cite{b'} says
that $$\min_k{\rm depth}(S/I^k)\leq n-\ell(I).$$By a theorem of
Brodmann \cite{b}, ${\rm depth}(S/I^k)$ is constant for large $k$. We call
this constant value the {\it limit depth} of $I$, and denote it by
$\lim_{k\rightarrow \infty}{\rm depth}(S/I^k)$. Brodmann improved the Burch
inequality by showing that$$\lim_{k\rightarrow \infty}{\rm depth}(S/I^k)
\leq n-\ell(I).$$The smallest integer $t\geq 1$ such that ${\rm depth}(S/I^m)=\lim_{k\rightarrow \infty}{\rm depth}(S/I^k)$ for all $m\geq t$ is called the {\it index of depth stability of powers} of $I$ and is denoted by ${\rm dstab}(I)$. It is of great interest to compute the limit of the sequence $\{{\rm depth}(S/I^k)\}_{k=1}^{\infty}$ and to determine or bound its index of stability. The most general results in this direction were obtained in \cite{hh'', hq, t}. In \cite{hh''}, Herzog and Hibi proved that if the associated graded ring ${\rm gr}_I(S)$ is Cohen–Macaulay, then$$\lim_{t\rightarrow \infty}{\rm depth}(S/I^t)
= n-\ell(I).$$Herzog and Qureshi \cite{hq} showed that for every polymatroidal ideal $I$, we have ${\rm dstab}(I)\leq \ell(I)$. Furthermore, they asked whether it is true that for every squarefree monomial ideal $I$, the inequality ${\rm dstab}(I) < n$ holds. Trung \cite{t} investigated the case of edge ideals and proved that for any edge ideal $I(G)\subset S$, the limit $\lim_{k\rightarrow \infty}{\rm depth}(S/I(G)^k)$ is $n-\ell(I(G))$ which is equal to the number of bipartite connected components of $G$. Moreover, in the same paper, it is shown that for any graph $G$ with $n$ vertices, we have ${\rm dstab}(I(G))< n$. This gives a positive answer to the above mentioned question of Herzog and Qureshi, in the case of edge ideals.

It is also of interest to consider similar problems for the symbolic powers of monomial ideals. Let $I\subset S$ be a squarefree monomial ideal. For every integer $k\geq 1$, we denote the $k$-th symbolic power of $I$ by $I^{(k)}$. It immediately follows from \cite[Theorem 4.7]{ht} that the sequence $\{{\rm depth}(S/I^{(k)})\}_{k=1}^{\infty}$ is convergent. Let $\mathcal{R}_s(I)=\bigoplus_
{k=0}^{\infty}I^{(k)}$ be the {\it symbolic Rees ring} of $I$. The Krull dimension of $\mathcal{R}_s(I)/{{\frak{m}}\mathcal{R}(I)}$ is called the {\it symbolic analytic spread} of $I$ and is denoted by $\ell_s(I)$. Varbaro \cite[Proposition 2.4]{v1} proved that$$\min_k{\rm depth}(S/I^{(k)})=n-\ell_s(I).$$This equality was then improved in \cite{hktt} by showing that$$\min_k{\rm depth}(S/I^{(k)})=\lim_{k\rightarrow \infty}{\rm depth}(S/I^{(k)})=n-\ell_s(I).$$Let ${\rm dstab}_s(I)$ denote the {\it index of depth stability of symbolic powers} of $I$ which is the smallest integer $t\geq 1$ with ${\rm depth}(S/I^{(m)})=\lim_{k\rightarrow \infty}{\rm depth}(S/I^{(k)})$ for all $m\geq t$. In \cite{hktt}, it was also proven that$${\rm dstab}_s(I)\leq n(n+1){\rm bight}(I)^{n/2},$$where ${\rm bight}(I)$ is the maximum height of associated primes of $I$.

Recently, Monta\~no and N\'u\~nez-Betancourt \cite[Theorem 3.4]{mn} proved that for every squarefree monomial ideal and for any pair of integers $m, k\geq 1$, we have$${\rm depth}(S/I^{(m)})\leq {\rm depth}(S/I^{(\lceil\frac{m}{k}\rceil)}).$$In Theorem \ref{depsym}, we provide an alternative proof for the above inequality. While the proof in \cite{mn} is based on constructing a splittable map between distinct symbolic powers of $I$, our proof is based on a formula due to Takayama \cite{t1}. Next, we use this inequality to reprove that the sequence $\{{\rm depth}(S/I^{(k)})\}_{k=1}^{\infty}$ is convergent and$$\min_k{\rm depth}(S/I^{(k)})=\lim_{k\rightarrow \infty}{\rm depth}(S/I^{(k)}).$$Moreover, we provide an alternative proof for the equality$$\lim_{k\rightarrow \infty}{\rm depth}(S/I^{(k)})=n-\ell_s(I)$$(see Theorem \ref{dmain}). For every squarefree monomial ideal $I$, let ${\rm dmin}_s(I)$ denote the smallest integer $t\geq 1$ with ${\rm depth}(S/I^{(t)})=\lim_{k\rightarrow \infty}{\rm depth}(S/I^{(k)})$. In Theorem \ref{dmain}, we also determine an upper bound for ${\rm dstab}_s(I)$ in terms of ${\rm dmin}_s(I)$. More precisely, we show that$${\rm dstab}_s(I)\leq \max\{1, {\rm dmin}_s(I)^2-{\rm dmin}_s(I)\}.$$

Next, we study the Stanley depth of symbolic powers of squarefree monomial ideals. Let $M$ be a nonzero
finitely generated $\mathbb{Z}^n$-graded $S$-module. Let $u\in M$ be a
homogeneous element and $Z\subseteq \{x_1,\dots,x_n\}$. The $\mathbb
{K}$-subspace $u\mathbb{K}[Z]$ generated by all elements $uv$ with $v\in
\mathbb{K}[Z]$ is called a {\it Stanley space} of dimension $|Z|$, if it is
a free $\mathbb{K}[\mathbb{Z}]$-module. Here, as usual, $|Z|$ denotes the
number of elements of $Z$. A decomposition $\mathcal{D}$ of $M$ as a finite
direct sum of Stanley spaces is called a {\it Stanley decomposition} of
$M$. The minimum dimension of a Stanley space in $\mathcal{D}$ is called the
{\it Stanley depth} of $\mathcal{D}$ and is denoted by ${\rm
sdepth}(\mathcal {D})$. The quantity $${\rm sdepth}(M):=\max\big\{{\rm
sdepth}(\mathcal{D})\mid \mathcal{D}\ {\rm is\ a\ Stanley\ decomposition\
of}\ M\big\}$$ is called the {\it Stanley depth} of $M$. As a convention, we set ${\rm sdepth}(M)=\infty$, when $M$ is the zero module. For a reader friendly introduction to Stanley depth, we refer to \cite{psty} and for a nice survey on this topic, we refer to \cite{h}. Inspired by the limit behavior of depth of powers of ideals, Herzog \cite{h} proposed the following conjecture.

\begin{conj}
{\rm (}\cite[Conjecture 59]{h}{\rm )} For every monomial ideal $I$, the sequence $\{{\rm sdepth}(I^k)\}_{k=1}^{\infty}$ is convergent.
\end{conj}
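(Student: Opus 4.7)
The plan is to adapt, to the setting of ordinary powers, the strategy this paper will develop for symbolic powers. Since ${\rm sdepth}(I^k)$ takes values in $\{0,1,\dots,n\}$, the sequence takes only finitely many values, so it suffices to show that once it reaches its minimum it can never leave again. The natural target is an inequality of the form
$${\rm sdepth}(I^m)\leq {\rm sdepth}\bigl(I^{\lceil m/k\rceil}\bigr) \qquad \text{for all } m,k\geq 1,$$
modelled on the Monta\~no--N\'u\~nez-Betancourt inequality for depth that appears later as Theorem \ref{depsym}.

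Granted such an inequality, convergence would follow by a short pigeonhole argument identical in spirit to the one promised in the abstract for the symbolic case. Let $L=\min_k {\rm sdepth}(I^k)$, attained at $k=t_0$. For every $m\geq t_0(t_0-1)$ the interval $[m/t_0,\, m/(t_0-1))$ has length at least one and therefore contains an integer $k$, for which $\lceil m/k\rceil=t_0$; the inequality then yields ${\rm sdepth}(I^m)\leq {\rm sdepth}(I^{t_0})=L$, while the minimality of $L$ forces equality. Consequently the sequence is eventually constant with limit $L$, and sdepth-stability sets in by index $\max\{1,\,t_0^2-t_0\}$.

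All the difficulty is thus concentrated in establishing the inequality. The depth analogue in Theorem \ref{depsym} is proved via Takayama's formula, which expresses the relevant local cohomology module as a direct sum indexed by a combinatorial degree set $\Delta_a(I^m)$, and observes that the rescaling $a\mapsto \lceil a/k\rceil$ maps this set into $\Delta_{\lceil a/k\rceil}(I^{\lceil m/k\rceil})$. A Stanley-depth version of this step would presumably use a Herzog--Vladoiu-type poset description of ${\rm sdepth}(I^m)$ and construct a Stanley decomposition of $I^{\lceil m/k\rceil}$ out of one for $I^m$ by a compatible rescaling of exponent vectors. The main obstacle, and the reason the conjecture remains open, is structural: Stanley depth admits no local-cohomology analogue, it is not known to respect short exact sequences, and the Herzog--Vladoiu posets attached to different powers do not embed into one another in any obvious way. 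This is precisely why the present paper restricts to symbolic powers, where the containment $I^{(a)}I^{(b)}\subseteq I^{(a+b)}$ supplies extra combinatorial leverage that the ordinary-power setting seems to lack.
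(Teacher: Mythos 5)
This statement is Herzog's Conjecture 59, which the paper records as an \emph{open conjecture}; it offers no proof, and neither do you. Your write-up is an honest strategy sketch rather than a proof: you reduce convergence to the hypothetical inequality ${\rm sdepth}(I^m)\leq {\rm sdepth}(I^{\lceil m/k\rceil})$ and then concede that this inequality is exactly the part nobody knows how to establish. The conditional pigeonhole step is fine (it is the same bookkeeping the paper uses in Theorem \ref{sdmain} for symbolic powers), but a proof whose only nontrivial ingredient is left unproven is not a proof, so the gap is genuine and is the whole content of the conjecture.

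It is worth being concrete about \emph{why} the template cannot transfer verbatim. The engine behind the symbolic-power results is Lemma \ref{power}: $u\in I^{(m)}$ if and only if $u^{k+1}\in I^{(km+j)}$, which holds because $I^{(k)}=\frak{p}_1^k\cap\cdots\cap\frak{p}_r^k$ reduces everything to a degree count along each minimal prime. The ordinary-power analogue is false: for $I=(xy,yz,zx)$ and $u=xyz$ one has $u^2=(xy)(yz)(zx)\in I^3$ while $u\notin I^2$ (degree reasons), so the map $u\mapsto u^{k+1}$ does not satisfy condition (i) of Lemma \ref{scand} with respect to ordinary powers, and the intended comparison of Stanley decompositions collapses at the first step. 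Worse, the depth analogue of your target inequality (with $k=m$ it would read ${\rm depth}(S/I^m)\leq{\rm depth}(S/I)$ for all $m$) is known to fail for monomial ideals, since non-monotone depth functions of ordinary powers exist; this is strong evidence that no inequality of the proposed shape can be the route to the conjecture. So the correct verdict is simply that the statement remains a conjecture, that your reduction, while correctly modeled on the paper's symbolic-power argument, does not close it, and that the specific mechanism you hope to borrow provably breaks for ordinary powers.
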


One can ask the similar question for symbolic powers.

\begin{ques} \label{qmain}
Let $I\subset S$ be a squarefree monomial ideal $I$. Are the sequences $\{{\rm sdepth}(I^{(k)})\}_{k=1}^{\infty}$ and $\{{\rm sdepth}(S/I^{(k)})\}_{k=1}^{\infty}$ convergent?
\end{ques}

In \cite[Theorem 2.1]{s4}, we invented a method to compare the Stanley depth of monomial ideals (see Lemma \ref{scand}). Using this method, in Theorem \ref{sdepsym}, we show that if $I$ is a squarefree monomial ideal and $m$ and $k$ are positive integers, then for every integer $j$ with $m-k\leq j\leq m$, we have$${\rm sdepth}(I^{(m)})\geq {\rm sdepth}(I^{(km+j)}) \ \ \ \ \ \ \ {\rm and} \ \ \ \ \ \ \ {\rm sdepth}(S/I^{(m)})\geq {\rm sdepth}(S/I^{(km+j)}).$$

In Theorem \ref{sdmain}, we use the above inequalities to give a positive answer to Question \ref{qmain}. Furthermore, in the same theorem, we show that the equalities$$\min_k{\rm sdepth}(S/I^{(k)})=\lim_{k\rightarrow \infty}{\rm sdepth}(S/I^{(k)}),$$and$$\min_k{\rm sdepth}(I^{(k)})=\lim_{k\rightarrow \infty}{\rm sdepth}(I^{(k)})$$hold, for any squarefree monomial ideal. The assertions of Theorem \ref{sdmain} allow us to define the {\it indices of sdepth stability of symbolic powers} as follows.
$${\rm sdstab}_s(I)=\min \big\{t \mid {\rm sdepth}(I^{(m)})=\lim_{k\rightarrow \infty}{\rm sdepth}(I^{(k)}) \ {\rm for \ all} \ m\geq t\big\}$$
$${\rm sdstab}_s(S/I)=\min \big\{t \mid {\rm sdepth}(S/I^{(m)})=\lim_{k\rightarrow \infty}{\rm sdepth}(S/I^{(k)}) \ {\rm for \ all} \ m\geq t\big\}$$We also define the following quantities.
$${\rm sdmin}_s(I)=\min \big\{t \mid {\rm sdepth}(I^{(t)})=\lim_{k\rightarrow \infty}{\rm sdepth}(I^{(k)})\big\}$$
$${\rm sdmin}_s(S/I)=\min \big\{t \mid {\rm sdepth}(S/I^{(t)})=\lim_{k\rightarrow \infty}{\rm sdepth}(S/I^{(k)})$$In Corollary \ref{sstabmin}, we determine an upper bound for ${\rm sdstab}_s(I)$ (resp. ${\rm sdstab}_s(S/I))$ in terms of ${\rm sdmin}_s(I)$ (resp. ${\rm sdmin}_s(S/I)$). More precisely, we show that for every squarefree monomial ideal $I$, we have$${\rm sdstab}_s(I)\leq \max\{1, {\rm sdmin}_s(I)^2-{\rm sdmin}_s(I)\}$$and$${\rm sdstab}_s(S/I)\leq \max\{1, {\rm sdmin}_s(S/I)^2-{\rm sdmin}_s(S/I)\}.$$

In general, we do not know how to compute the limit values $\lim_{k\rightarrow \infty}{\rm sdepth}(I^{(k)})$ and $\lim_{k\rightarrow \infty}{\rm sdepth}(S/I^{(k)})$. However, we will see in Theorem \ref{mat} that if $I$ is the Stanley-Reisner ideal of matroid, then$$\lim_{k\rightarrow \infty}{\rm sdepth}(S/I^{(k)})=n-\ell_s(I)$$and$$\lim_{k\rightarrow \infty}{\rm sdepth}(I^{(k)})\geq n-\ell_s(I)+1.$$


\section{Preliminaries} \label{sec2}

In this section, we provide the definitions and basic facts which will be used in the next sections. We first recall the definition of symbolic powers which are the main objective of this paper.

\begin{dfn}
Let $I$ be an ideal of $S$ and let ${\rm Min}(I)$ denote the set of minimal primes of $I$. For every integer $k\geq 1$, the $k$-th {\it symbolic power} of $I$,
denoted by $I^{(k)}$, is defined to be$$I^{(k)}=\bigcap_{\frak{p}\in {\rm Min}(I)} {\rm Ker}(R\rightarrow (R/I^k)_{\frak{p}}).$$
\end{dfn}

Let $I$ be a squarefree monomial ideal in $S$ and suppose that $I$ has the irredundant
primary decomposition $$I=\frak{p}_1\cap\ldots\cap\frak{p}_r,$$ where each
$\frak{p}_i$ is a prime ideal generated by a subset of the variables of
$S$. It follows from \cite[Proposition 1.4.4]{hh'} that for every integer $k\geq 1$, $$I^{(k)}=\frak{p}_1^k\cap\ldots\cap
\frak{p}_r^k.$$

Assume that $I\subset S$ is an arbitrary ideal. An element $f \in S$ is
{\it integral} over $I$, if there exists an equation
$$f^k + c_1f^{k-1}+ \ldots + c_{k-1}f + c_k = 0 {\rm \ \ \ \ with} \ c_i\in I^i.$$
The set of elements $\overline{I}$ in $S$ which are integral over $I$ is the {\it integral closure}
of $I$. The ideal $I$ is {\it integrally closed}, if $I = \overline{I}$, and $I$ is {\it normal} if all powers
of $I$ are integrally closed. By \cite[Theorem 3.3.18]{v'}, a monomial ideal $I$ is normal if and only if the Rees ring $\mathcal{R}(I)$ is a normal ring.

Let $R$ and $R'$ be two commutative rings with identity such that $R'\subseteq R$. We say $R$ is an {\it integral extension} of $R'$, if for any element $r\in R$, there exists a monic polynomial $p(x)\in R'[x]$ with $p(r)=0$.

A {\it simplicial complex} $\Delta$ on the set of vertices $V(\Delta)=[n]:=\{1,
\ldots,n\}$ is a collection of subsets of $[n]$ which is closed under
taking subsets; that is, if $F \in \Delta$ and $F'\subseteq F$, then also
$F'\in\Delta$. Every element $F\in\Delta$ is called a {\it face} of
$\Delta$. The {\it dimension} of a face $F$ is defined to be $|F|-1$. The {\it dimension} of
$\Delta$ which is denoted by $\dim\Delta$, is defined to be $d-1$, where $d
=\max\{|F|\mid F\in\Delta\}$. A {\it facet} of $\Delta$ is a maximal face
of $\Delta$ with respect to inclusion. We say that $\Delta$ is {\it
pure} if all facets of $\Delta$ have the same cardinality. A {\it simplex} is a simplicial complex which has only one facet. The {\it link}
of $\Delta$ with respect to a face $F \in \Delta$, denoted by ${\rm lk_
{\Delta}}(F)$, is the simplicial complex$${\rm lk_{\Delta}}(F)=\{G
\subseteq [n]\setminus F\mid G\cup F\in \Delta\}$$and the {\it deletion} of
$F$, denoted by ${\rm del_{\Delta}}(F)$, is the simplicial complex$${\rm
del_{\Delta}}(F)=\{G \subseteq [n]\setminus F\mid G \in \Delta\}.$$When $F
= \{x\}$ is a single vertex, we abuse the notation and write ${\rm lk_{\Delta}}(x)$ and ${\rm del_{\Delta}}(x)$. The $i$-th reduced homology of $\Delta$ with coefficients in $\mathbb{K}$ will be denoted by $\widetilde{H}_i(\Delta; \mathbb{K})$.

Let $\Delta$ be a simplicial complex on $[n]$. The {\it Stanley-Reisner ideal} of $\Delta$ is defined as$$I_{\Delta}=\big(\prod_{i\in F}x_i : F\subseteq [n], F\notin \Delta\big)\subseteq S.$$A squarefree monomial ideal is {\it unmixed} if the associated prime ideals of $I$ have the same height. It follows from \cite[Lemma 1.5.4]{hh'} that the $I_{\Delta}$ is unmixed if and only if $\Delta$ is pure.

\begin{dfn}
Let $\Delta$ be a simplicial complex on the vertex set $[n]$. Then we say that $\Delta$ is {\it vertex decomposable} if either
\begin{itemize}
\item[(1)] $\Delta$ is a simplex, or\\[-0.3cm]

\item[(2)] there exists $x\in [n]$ such that ${\rm del_{\Delta}}(x)$ and
    ${\rm lk_{\Delta}}(x)$ are vertex decomposable and every facet of
    ${\rm del_{\Delta}}(x)$ is a facet of $\Delta$.
\end{itemize}
\end{dfn}

An interesting family of simplicial complexes is the class of matroids which is defined as follows.

\begin{dfn}
A simplicial complex $\Delta$ is called {\it matroid} if for every pair of faces $F, G \in \Delta$ with $|F| > |G|$, there is a vertex $x\in F\setminus G$ such that $G\cup\{x\}$ is a face of $\Delta$. It is well-know and easy to prove that every matroid is a pure simplicial complex.
\end{dfn}

We close this section by recalling the definition of clean modules which will be used in the proof of Theorem \ref{mat}.

Let $M$ be a finitely generated $\mathbb{Z}^n$-graded $S$-module and assume that ${\rm Min}(M)$ is the set of minimal primes of $M$. A chain$$\mathcal{F}: 0=M_0\subset M_1\subset\cdots\subset M_r=M$$of $\mathbb{Z}^n$-graded $S$-modules is a {\it prime filtration} of $M$, if for every integer $i=1, \ldots r$, there exists a monomial prime ideal $\frak{p}_i$ and a vector $\alpha_i\in \mathbb{Z}^n$ with $M_i/M_{i-1}\cong S/\frak{p}_i(-\alpha_i)$. The prime filtration $\mathcal{F}$ is called {\it clean} if ${\rm Min}(M)=\{\frak{p}_1, \ldots, \frak{p}_r\}$. The module $M$ is a {\it clean module} if it admits a clean prime filtration.


\section{Depth of symbolic powers} \label{sec3}

In this section, we study the depth of symbolic powers of squarefree monomial ideals. Our first goal is to reprove the result of Monta\~no and N\'u\~nez-Betancourt \cite[Theorem 3.4]{mn} which states that that for every squarefree monomial ideal $I$ and for any pair of integers $m, k\geq 1$, we have$${\rm depth}(S/I^{(m)})\leq {\rm depth}(S/I^{(\lceil\frac{m}{k}\rceil)}).$$Our proof is based on a formula due to Takayama \cite{t1}. Hence, we first recall this formula.

Let $I$ be a monomial ideal. As $S/I$ is a $\mathbb{Z}^n$-graded $S$-module, it follows that for every integer $i$, the local cohomology module $H_{\mathfrak{m}}^i(S/I)$ is $\mathbb{Z}^n$-graded too. For any vector $\alpha\in \mathbb{Z}^n$, we denote the $\alpha$-component of $H_{\mathfrak{m}}^i(S/I)$ by $H_{\mathfrak{m}}^i(S/I)_{\alpha}$. The {\it support} of the vector $\alpha=(\alpha_1, \ldots, \alpha_n)$ is defined to be the set ${\rm Supp}(\alpha)=\{i : \alpha_i > 0\}$ and its co-support is the set ${\rm CoSupp}(\alpha)=\{i : \alpha_i < 0\}$. For any subset $F\subseteq [n]$, let $S_F=S[x_i^{-1}: i\in F]$. For every vector $\alpha=(\alpha_1, \ldots, \alpha_n)\in \mathbb{Z}^n$, we set$$\Delta_{\alpha}(I)=\{F\subseteq [n]\setminus {\rm CoSupp}(\alpha) : \mathbf{x}^{\alpha}\notin IS_{F\cup {\rm CoSupp}(\alpha)}\},$$where $\mathbf{x}^{\alpha}=x_1^{\alpha_1}\ldots x_n^{\alpha_n}$. Takayama \cite[Theorem 2.2]{t1} proves that for any vector $\alpha\in \mathbb{Z}^n$ and for every integer $i$, we have
\[
\begin{array}{rl}
\dim_{\mathbb{K}} H_{\mathfrak{m}}^i(S/I)_{\alpha}=\dim_{\mathbb{K}} \widetilde{H}_{i-\mid{\rm CoSupp}(\alpha)\mid-1}(\Delta_{\alpha}(I); \mathbb{K}).
\end{array} \tag{1} \label{1}
\]

Using this formula, we are able to prove the following lemma. This lemma will be used later in the study of the depth of symbolic powers.

\begin{prop} \label{main}
Let $I$ and $J$ be monomial ideals in $S$. Assume that there exists a function $\varphi: {\rm Mon}(S)\rightarrow{\rm Mon}(S)$ which satisfies the following conditions.
\begin{itemize}
\item[(i)] For every monomial $\mathbf{x}^{\alpha}\in S$ and every subset $F\subseteq [n]$, $\mathbf{x}^{\alpha}\in IS_F$ if and only if $\varphi(\mathbf{x}^{\alpha})\in JS_F$.
\item[(ii)] For every monomial $\mathbf{x}^{\alpha}\in S$, we have ${\rm Supp}(\beta)\subseteq {\rm Supp}(\alpha)$, where $\beta$ is the exponent vector of $\varphi(\mathbf{x}^{\alpha})$, i.e., $\mathbf{x}^{\beta}=\varphi(\mathbf{x}^{\alpha})$.
\end{itemize}
Then$${\rm depth}(S/I)\geq {\rm depth}(S/J).$$
\end{prop}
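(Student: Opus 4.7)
The plan is to show that if $H_{\mathfrak{m}}^i(S/I)_\alpha \neq 0$ for some grade $\alpha\in\mathbb{Z}^n$, then one can manufacture a grade $\alpha'\in\mathbb{Z}^n$ with $H_{\mathfrak{m}}^i(S/J)_{\alpha'}\neq 0$. Since $\mathrm{depth}(M)=\min\{i:H_{\mathfrak{m}}^i(M)\neq 0\}$ for any finitely generated graded $S$-module $M$, this implication immediately yields $\mathrm{depth}(S/I)\geq \mathrm{depth}(S/J)$. The device through which we compare local cohomologies is the formula (\ref{1}) of Takayama.

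Given $\alpha\in\mathbb{Z}^n$, write $\alpha=\alpha_+-\alpha_-$ with $\alpha_+,\alpha_-\in\mathbb{Z}_{\geq 0}^n$ of disjoint support. Then $\mathbf{x}^{\alpha_+}$ is an honest monomial of $S$, so $\varphi$ can be applied to it; let $\beta$ denote the exponent vector of $\varphi(\mathbf{x}^{\alpha_+})$. Set $\alpha':=\beta-\alpha_-$. By condition (ii) we have $\mathrm{Supp}(\beta)\subseteq \mathrm{Supp}(\alpha_+)$, which is disjoint from $\mathrm{Supp}(\alpha_-)$; hence $\alpha'_+=\beta$, $\alpha'_-=\alpha_-$, and in particular $\mathrm{CoSupp}(\alpha')=\mathrm{CoSupp}(\alpha)$. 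The claim I would next verify is that, as subcomplexes of $2^{[n]\setminus \mathrm{CoSupp}(\alpha)}$,
$$\Delta_\alpha(I)=\Delta_{\alpha'}(J).$$
To see this, fix $F\subseteq [n]\setminus\mathrm{CoSupp}(\alpha)$ and put $G:=F\cup\mathrm{CoSupp}(\alpha)$. Since $\mathbf{x}^{\alpha_-}$ is a unit in $S_G$, membership $\mathbf{x}^\alpha\in IS_G$ is equivalent to $\mathbf{x}^{\alpha_+}\in IS_G$; likewise $\mathbf{x}^{\alpha'}\in JS_G$ is equivalent to $\mathbf{x}^{\beta}\in JS_G$. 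But by hypothesis (i) applied with this $G$, the two outer conditions agree. Thus $F\in\Delta_\alpha(I)\iff F\in\Delta_{\alpha'}(J)$.

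Combining the equality of complexes with the equality of cosupport cardinalities and feeding both sides into formula (\ref{1}) yields
$$\dim_{\mathbb{K}} H_{\mathfrak{m}}^i(S/I)_\alpha=\dim_{\mathbb{K}} \widetilde{H}_{i-|\mathrm{CoSupp}(\alpha)|-1}(\Delta_\alpha(I);\mathbb{K})=\dim_{\mathbb{K}} H_{\mathfrak{m}}^i(S/J)_{\alpha'}$$
for every $i$. Hence a nonvanishing of $H_{\mathfrak{m}}^i(S/I)$ at $\alpha$ forces a nonvanishing of $H_{\mathfrak{m}}^i(S/J)$ at $\alpha'$, and the required inequality on depths follows from the characterization of depth via local cohomology.

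The only real point that requires care is the bookkeeping around the decomposition $\alpha=\alpha_+-\alpha_-$ and the verification that the support condition (ii) is precisely what makes $\alpha'_+=\beta$, so that $\mathrm{CoSupp}(\alpha')$ coincides with $\mathrm{CoSupp}(\alpha)$; without (ii) one could not guarantee that the two Takayama complexes live on the same ambient vertex set. Once this is observed, the rest is a direct transcription of Takayama's formula.
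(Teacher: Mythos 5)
Your proposal is correct and follows essentially the same route as the paper's own proof: decompose $\alpha=\alpha_+-\alpha_-$, apply $\varphi$ to $\mathbf{x}^{\alpha_+}$, use (ii) to identify the cosupports and (i) together with the invertibility of $\mathbf{x}^{\alpha_-}$ in $S_{F\cup{\rm CoSupp}(\alpha)}$ to identify the Takayama complexes, then transfer nonvanishing of local cohomology via formula (\ref{1}). The only cosmetic difference is that you record the equality of graded pieces for all $i$, whereas the paper applies the argument only at $i={\rm depth}(S/I)$.
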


\begin{proof}
Set $t={\rm depth}(S/I)$. It follows that there exists a vector $\alpha\in \mathbb{Z}^n$ such that $H_{\mathfrak{m}}^t(S/I)_{\alpha}\neq 0$. Thus, equality (\ref{1}), implies that$$\widetilde{H}_{t-\mid{\rm CoSupp}(\alpha)\mid-1}(\Delta_{\alpha}(I); \mathbb{K})\neq 0.$$We write $\alpha=\alpha_+-\alpha_-$ where $\alpha_+$ and $\alpha_-$ are vectors in $\mathbb{Z}_{\geq 0}^n$ with disjoint supports. Assume that $\varphi(\mathbf{x}^{\alpha_+})=\mathbf{x}^{\gamma}$ and set $\beta=\gamma-\alpha_-$. We know from the assumptions that ${\rm Supp}(\gamma)\subseteq {\rm Supp}(\alpha_+)$ and therefore,$${\rm CoSupp}(\beta)={\rm Supp}(\alpha_-)={\rm CoSupp}(\alpha).$$On the other hand, by condition (i),
\begin{align*}
\Delta_{\alpha}(I) & = \{F\subseteq [n]\setminus {\rm CoSupp}(\alpha) : \mathbf{x}^{\alpha}\notin IS_{F\cup {\rm CoSupp}(\alpha)}\}\\ & =\{F\subseteq [n]\setminus {\rm CoSupp}(\alpha) : \mathbf{x}^{\alpha_+}\notin IS_{F\cup {\rm CoSupp}(\alpha)}\}\\ & = \{F\subseteq [n]\setminus {\rm CoSupp}(\beta) : \mathbf{x}^{\gamma}\notin JS_{F\cup {\rm CoSupp}(\beta)}\}\\ & = \{F\subseteq [n]\setminus {\rm CoSupp}(\beta) : \mathbf{x}^{\gamma}\mathbf{x}^{\alpha_-}\notin JS_{F\cup {\rm CoSupp}(\beta)}\}\\ & = \{F\subseteq [n]\setminus {\rm CoSupp}(\beta) : \mathbf{x}^{\beta}\notin JS_{F\cup {\rm CoSupp}(\beta)}\}\\ & = \Delta_{\beta}(J).
\end{align*}

The above equalities together with the fact ${\rm CoSupp}(\beta)={\rm CoSupp}(\alpha)$ implies that$$\widetilde{H}_{t-\mid{\rm CoSupp}(\beta)\mid-1}(\Delta_{\beta}(J); \mathbb{K})\neq 0.$$Hence, it follows from equality (\ref{1}) that $H_{\mathfrak{m}}^t(S/J)_{\beta}\neq 0$ and therefore, ${\rm depth}(S/J)\leq t$.
\end{proof}

In order to use Proposition \ref{main} to compare the depth of symbolic powers, we first need to find a function $\varphi$ which satisfies the assumptions of that proposition. The following lemma tells us how to define this map. We recall that for every monomial $u$ and every integer $1\leq i\leq n$, we denote by ${\rm deg}_{x_i}u$ the highest power of $x_i$ which divides $u$.

\begin{lem} \label{power}
Let $I\subset S$ be a squarefree monomial ideal. Suppose that $m$ and $k$ are positive integers. Then for every integer $j$ with $m-k\leq j\leq m$ and every monomial $u$, we have $u\in I^{(m)}$ if and only if $u^{k+1}\in I^{(km+j)}$.
\end{lem}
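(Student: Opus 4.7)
The plan is to reduce the problem to elementary arithmetic by using the primary decomposition formula for symbolic powers. Since $I$ is a squarefree monomial ideal, we have $I=\mathfrak{p}_1\cap\cdots\cap\mathfrak{p}_r$, where each $\mathfrak{p}_i$ is a monomial prime, and consequently $I^{(m)}=\mathfrak{p}_1^m\cap\cdots\cap\mathfrak{p}_r^m$ for every $m\geq 1$. Thus a monomial $u$ lies in $I^{(m)}$ if and only if $u\in\mathfrak{p}_i^m$ for every $i$. For a monomial prime $\mathfrak{p}$ generated by a subset of the variables, membership $u\in\mathfrak{p}^m$ is equivalent to
\[
\nu_{\mathfrak{p}}(u):=\sum_{x_s\in\mathfrak{p}}\deg_{x_s}(u)\geq m.
\]
I would record this standard fact (for instance via the fact that localizing at $\mathfrak{p}$ reduces the question to the variables in $\mathfrak{p}$) as the only ingredient needed.

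Next I would translate both sides of the desired equivalence into inequalities on the integers $\nu_{\mathfrak{p}}(u)$. The statement $u\in I^{(m)}$ becomes $\nu_{\mathfrak{p}_i}(u)\geq m$ for all $i$, while $u^{k+1}\in I^{(km+j)}$ becomes $(k+1)\nu_{\mathfrak{p}_i}(u)\geq km+j$ for all $i$, since $\nu_{\mathfrak{p}}(u^{k+1})=(k+1)\nu_{\mathfrak{p}}(u)$.

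For the forward direction, if $\nu_{\mathfrak{p}_i}(u)\geq m$, then
\[
(k+1)\nu_{\mathfrak{p}_i}(u)\geq (k+1)m = km+m\geq km+j,
\]
using $j\leq m$. For the reverse direction, suppose $(k+1)\nu_{\mathfrak{p}_i}(u)\geq km+j$ for every $i$. Since $\nu_{\mathfrak{p}_i}(u)$ is a nonnegative integer, it suffices to show that any integer $N$ satisfying $(k+1)N\geq km+j$ must satisfy $N\geq m$. Equivalently, I need $\lceil (km+j)/(k+1)\rceil\geq m$. Rearranging, this amounts to $km+j>(m-1)(k+1)=km+m-k-1$, i.e., $j\geq m-k$, which is exactly the hypothesis on $j$.

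There is no real obstacle; the content is the observation that the two constraints $j\leq m$ and $j\geq m-k$ are precisely the ones needed to make the trivial inequality in one direction and the ceiling-based inequality in the other direction both work. Once the reduction to $\nu_{\mathfrak{p}}$ is made, the remainder is a one-line arithmetic check in each direction.
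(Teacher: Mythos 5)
Your proof is correct and is essentially the same as the paper's: both reduce membership in $I^{(m)}=\mathfrak{p}_1^m\cap\cdots\cap\mathfrak{p}_r^m$ to the inequalities $\sum_{x_s\in\mathfrak{p}_i}\deg_{x_s}(u)\geq m$ and then verify the two arithmetic implications using $j\leq m$ and $j\geq m-k$ respectively. The only cosmetic difference is that the paper argues the reverse direction by contradiction while you argue it directly via a ceiling estimate; the underlying inequality $(k+1)(m-1)<km+j$ is identical.
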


\begin{proof}
Let $I=\frak{p}_1\cap\ldots\cap\frak{p}_r,$ be the irredundant primary decomposition of $I$. First suppose that $u\in I^{(m)}$. Then for every integer $t$ with $1\leq t\leq r$, we have $u\in \frak{p}_t^m$. In other words,$$\sum_{x_i\in \frak{p}_t}{\rm deg}_{x_i}u\geq m.$$This implies that$$\sum_{x_i\in \frak{p}_t}{\rm deg}_{x_i}u^{k+1}\geq (k+1)m\geq km+j.$$Hence, $u^{k+1}\in \frak{p}_t^{km+j}$, for every $1\leq t\leq r$. In particular, $u^{k+1}\in I^{(km+j)}$.

Conversely, assume that $u^{k+1}\in I^{(km+j)}$. If $u\notin I^{(m)}$, then there exists an integer $s$ with $1\leq s\leq r$, we have $u\notin \frak{p}_s^m$. In other words,$$\sum_{x_i\in \frak{p}_s}{\rm deg}_{x_i}u\leq m-1.$$This implies that$$\sum_{x_i\in \frak{p}_s}{\rm deg}_{x_i}u^{k+1}\leq (k+1)(m-1)<km+m-k\leq km+j,$$where the last inequality follows from the assumption that $m-k\leq j$. Hence, $u^{k+1}\notin \frak{p}_s^{km+j}$ and consequently, $u^{k+1}\notin I^{(km+j)}$, which is a contradiction.
\end{proof}

We are now ready to reprove the result of Monta\~no and N\'u\~nez-Betancourt.

\begin{thm} \label{depsym}
{\rm (}\cite[Theorem 3.4]{mn}{\rm )} Let $I\subset S$ be a squarefree monomial ideal. Suppose that $m$ and $k$ are positive integers. Then for every integer $j$ with $m-k\leq j\leq m$, we have$${\rm depth}(S/I^{(m)})\geq {\rm depth}(S/I^{(km+j)}).$$
\end{thm}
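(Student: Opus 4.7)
The plan is to deduce Theorem \ref{depsym} as a direct application of Proposition \ref{main} applied to the pair of ideals $(I^{(m)}, I^{(km+j)})$, using the map $\varphi: {\rm Mon}(S) \to {\rm Mon}(S)$ defined by $\varphi(u) = u^{k+1}$. Condition (ii) of Proposition \ref{main} is then automatic: the exponent vector of $u^{k+1}$ is $(k+1)$ times that of $u$, so the two monomials have identical support.

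The substance of the proof is the verification of condition (i), namely that for every monomial $u \in S$ and every $F \subseteq [n]$,
$$u \in I^{(m)} S_F \iff u^{k+1} \in I^{(km+j)} S_F.$$
For this I would combine the irredundant primary decomposition $I = \frak{p}_1 \cap \cdots \cap \frak{p}_r$ with the identity $I^{(N)} = \frak{p}_1^N \cap \cdots \cap \frak{p}_r^N$ and observe that localizing at $S_F$ simply discards those $\frak{p}_t$ which meet $\{x_i : i \in F\}$ (they become the unit ideal), while for $\frak{p}_t$ disjoint from $F$ a monomial $u$ lies in $\frak{p}_t^N S_F$ precisely when $\sum_{x_i \in \frak{p}_t} {\rm deg}_{x_i}(u) \geq N$. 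The equivalence in condition (i) therefore reduces, for each surviving prime $\frak{p}_t$, to the arithmetic fact that for every nonnegative integer $\sigma$ one has $\sigma \geq m$ if and only if $(k+1)\sigma \geq km+j$. The forward direction uses $(k+1)m \geq km + j$, i.e.\ $j \leq m$; the reverse uses that $\sigma \leq m-1$ forces $(k+1)\sigma \leq (k+1)(m-1) = km + m - k - 1 < km + j$, i.e.\ $m - k \leq j$. Both are guaranteed by the hypothesis $m - k \leq j \leq m$.

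The only real obstacle is that Lemma \ref{power} as stated handles only the ungraded membership $u \in I^{(m)}$, whereas condition (i) demands the localized version, but this requires no new idea: the same inequality $\sigma \geq m \iff (k+1)\sigma \geq km+j$ is applied to each component of the primary decomposition that survives localization at $S_F$. Once this is established, Proposition \ref{main} immediately delivers the desired inequality ${\rm depth}(S/I^{(m)}) \geq {\rm depth}(S/I^{(km+j)})$.
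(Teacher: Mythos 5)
Your proposal is correct and follows essentially the same route as the paper: the map $\varphi(u)=u^{k+1}$ together with Proposition \ref{main}, with all the work concentrated in verifying condition (i). The only cosmetic difference is that you check the localized membership $u\in I^{(m)}S_F \iff u^{k+1}\in I^{(km+j)}S_F$ directly from the primary decomposition, whereas the paper reduces it to Lemma \ref{power} by writing $u\in I^{(m)}S_F$ as the existence of a monomial $v\in\mathbb{K}[x_i\mid i\in F]$ with $uv\in I^{(m)}$; both hinge on the same arithmetic equivalence $\sigma\geq m \iff (k+1)\sigma\geq km+j$.
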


\begin{proof}
Set $\varphi(u)=u^{k+1}$, for all monomials $u\in {\rm Mon}(S)$. Then for every subset $F\subseteq [n]$ and every monomial $u$, we have $u\in I^{(m)}S_F$ if and only if there is a monomial $v\in \mathbb{K}[x_i \mid i\in F]$ with $uv\in I^{(m)}$. By Lemma \ref{power}, this is equivalent to say that$$\varphi(uv)=u^{k+1}v^{k+1}\in I^{(km+j)},$$which means that $\varphi(u)\in I^{(km+j)}S_F$. The assertion now follows from Proposition \ref{main}.
\end{proof}

As an immediate consequences of Theorem \ref{depsym}, we obtain the following corollaries.

\begin{cor}
For every squarefree monomial ideal $I\subset S$, we have$${\rm depth}(S/I)\geq {\rm depth}(S/I^{(2)})\geq {\rm depth}(S/I^{(3)}).$$
\end{cor}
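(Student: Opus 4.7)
The plan is to derive both inequalities as direct specializations of Theorem \ref{depsym}, choosing the parameters $m$, $k$, and $j$ appropriately so that $km+j$ equals $2$ and then $3$.

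For the first inequality ${\rm depth}(S/I)\geq {\rm depth}(S/I^{(2)})$, I would apply Theorem \ref{depsym} with $m=1$ and $k=1$. The allowed range for $j$ is $m-k\leq j\leq m$, which becomes $0\leq j\leq 1$. Taking $j=1$ yields $km+j = 2$, so the theorem gives ${\rm depth}(S/I^{(1)})\geq {\rm depth}(S/I^{(2)})$, which is exactly what is needed since $I^{(1)}=I$.

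For the second inequality ${\rm depth}(S/I^{(2)})\geq {\rm depth}(S/I^{(3)})$, I would again invoke Theorem \ref{depsym}, this time with $m=2$ and $k=1$. The constraint $m-k\leq j\leq m$ becomes $1\leq j\leq 2$, and the choice $j=1$ produces $km+j=3$, giving the desired comparison ${\rm depth}(S/I^{(2)})\geq {\rm depth}(S/I^{(3)})$.

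Since both steps are single applications of the previously established Theorem \ref{depsym} with explicit and valid choices of parameters, I do not expect any genuine obstacle. The only thing to verify in each case is that the chosen $j$ lies in the permissible range $[m-k, m]$, which is immediate. Combining the two inequalities then yields the chain stated in the corollary.
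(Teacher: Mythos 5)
Your proposal is correct and is exactly the intended derivation: the paper presents this corollary as an immediate consequence of Theorem \ref{depsym}, obtained by the parameter choices $(m,k,j)=(1,1,1)$ and $(2,1,1)$, just as you do. Both choices of $j$ lie in the required range $[m-k,m]$, so nothing further is needed.
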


\begin{cor} \label{semi}
Let $I\subset S$ be a squarefree monomial ideal. Then for every pair of integers $m, k\geq 1$
$${\rm depth} (S/I^{(mk)}) \leq {\rm depth} (S/I^{(m)}).$$
\end{cor}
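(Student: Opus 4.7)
The plan is to deduce Corollary \ref{semi} as a direct specialization of Theorem \ref{depsym}. The key observation is purely arithmetic: we can write $mk = (k-1)m + m$, so that $mk$ has the form $k'm + j$ with $k' = k-1$ and $j = m$, which fits into the template of the inequality already established.

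First, I would dispose of the trivial case $k = 1$, where the claimed inequality reduces to ${\rm depth}(S/I^{(m)}) \leq {\rm depth}(S/I^{(m)})$. So assume $k \geq 2$, and set $k' = k-1 \geq 1$ and $j = m$. The hypothesis $m - k' \leq j \leq m$ of Theorem \ref{depsym} then reads $m - (k-1) \leq m \leq m$, which clearly holds. Applying Theorem \ref{depsym} (with the role of its $k$ played by $k'$) to the pair $(m, k')$ and this choice of $j$, we obtain
\[
{\rm depth}(S/I^{(m)}) \geq {\rm depth}\bigl(S/I^{(k'm + j)}\bigr) = {\rm depth}\bigl(S/I^{(mk)}\bigr),
\]
which is exactly the desired inequality.

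There is no real obstacle here; the content lies entirely in Theorem \ref{depsym}, and the only thing to verify is that the constraint on $j$ is satisfied with $j = m$, which it is, precisely because the admissible range $[m - k', m]$ includes its right endpoint. I would present the argument as a single short paragraph, emphasizing that the identity $mk = (k-1)m + m$ is what makes the general inequality of Theorem \ref{depsym} specialize to the multiplicative-monotonicity statement of the corollary.
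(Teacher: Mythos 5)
Your proposal is correct and is essentially identical to the paper's own proof, which also applies Theorem \ref{depsym} via the decomposition $mk=(k-1)m+m$ with $j=m$. Your explicit handling of the trivial case $k=1$ (where $k-1=0$ falls outside the theorem's hypothesis) is a small point of care that the paper glosses over.
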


\begin{proof}
By Theorem \ref{depsym}, $${\rm depth} (S/I^{(m)})\geq {\rm depth}(S/I^{((k-1)m+m)})={\rm depth} (S/I^{(mk)}).$$
\end{proof}

As we mentioned in the introduction, it is known by \cite[Theorem 4.7]{ht} that the sequence $\{{\rm depth}(S/I^{(k)})\}_{k=1}^{\infty}$ is convergent. We also know from \cite[Proposition 2.4]{v1} and \cite[Theorem 2.4]{hktt} that$$\min_k{\rm depth}(S/I^{(k)})=\lim_{k\rightarrow \infty}{\rm depth}(S/I^{(k)})=n-\ell_s(I).$$In the following theorem, we provide alternative proof for theses results. Moreover, we determine an upper bound for ${\rm dstab}_s(I)$ in terms of ${\rm dmin}_s(I)$.

\begin{thm} \label{dmain}
For every squarefree monomial ideal $I\subset S$, the sequence $\{{\rm depth}(S/I^{(k)})\}_{k=1}^{\infty}$ is convergent and$$\min_k{\rm depth}(S/I^{(k)})=\lim_{k\rightarrow \infty}{\rm depth}(S/I^{(k)})= n-\ell_s(I).$$Moreover,$${\rm dstab}_s(I)\leq \max\{1, {\rm dmin}_s(I)^2-{\rm dmin}_s(I)\}.$$
\end{thm}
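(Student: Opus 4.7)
The plan is to combine Theorem~\ref{depsym} with Corollary~\ref{semi} in a short covering argument to establish convergence and the bound on ${\rm dstab}_s(I)$, and then to identify the stable value with $n-\ell_s(I)$ via a Veronese reduction to the ordinary-power case.

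\medskip

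\textbf{Convergence and the bound on ${\rm dstab}_s(I)$.} Set $d:=\min_{k\ge 1}{\rm depth}(S/I^{(k)})$, which exists since depth is a non-negative integer, and let $m_0:={\rm dmin}_s(I)$ be the smallest index achieving this minimum. I will show that ${\rm depth}(S/I^{(N)})\le d$ for every $N\ge m_0(m_0-1)$; by minimality equality holds there, which simultaneously yields convergence, the identity $\min_k=\lim_{k\to\infty}$, and the bound ${\rm dstab}_s(I)\le\max\{1,m_0^2-m_0\}$. The case $m_0=1$ is handled by applying Theorem~\ref{depsym} with $m=1$ and arbitrary $k$: one obtains ${\rm depth}(S/I)\ge{\rm depth}(S/I^{(N)})$ for every $N\ge 1$, so the sequence is already constant and the bound reduces to $1$. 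Assume now $m_0\ge 2$. Applying Theorem~\ref{depsym} with $m=m_0$ and $k=m_0-1,m_0,m_0+1,\dots$, the set of exponents $N=km_0+j$ with $j\in[m_0-k,m_0]$ fills the interval
\[
\bigl[(k+1)m_0-k,\;(k+1)m_0\bigr].
\]
A direct calculation shows that for $k\ge m_0-1$ consecutive such intervals overlap, so their union is $[m_0^2-m_0+1,\infty)$. The one remaining value, $N=m_0(m_0-1)$, is covered by Corollary~\ref{semi} applied with $m=m_0$ and $k=m_0-1$.

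\medskip

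\textbf{Identifying the limit with $n-\ell_s(I)$.} For the squarefree monomial ideal $I$, the symbolic Rees algebra $\mathcal{R}_s(I)=\bigoplus_{k\ge 0}I^{(k)}t^k$ is a finitely generated normal $S$-subalgebra of $S[t]$. Consequently one may pick $d\ge 1$ for which the $d$-th Veronese subring $\mathcal{R}_s(I)^{(d)}$ is standard graded; equivalently $(I^{(d)})^k=I^{(dk)}$ for every $k\ge 1$. The Rees ring of $I^{(d)}$, being a Veronese of the normal ring $\mathcal{R}_s(I)$, is itself normal, so $I^{(d)}$ is a normal monomial ideal. The depth-limit formula for ordinary powers of a normal monomial ideal then gives
\[
\lim_{k\to\infty}{\rm depth}(S/(I^{(d)})^k)=n-\ell(I^{(d)}),
\]
and $\ell(I^{(d)})=\ell_s(I)$ since the fiber ring of $I^{(d)}$ is the $d$-th Veronese of the symbolic fiber ring of $I$ and hence has the same Krull dimension. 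Combining this with the first step, which guarantees that the full sequence $\{{\rm depth}(S/I^{(k)})\}$ is eventually constant and therefore shares its limit with the subsequence indexed by the multiples of $d$, yields $\lim_{k\to\infty}{\rm depth}(S/I^{(k)})=n-\ell_s(I)$.

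\medskip

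\textbf{Main obstacle.} The covering argument is essentially combinatorial bookkeeping once Theorem~\ref{depsym} and Corollary~\ref{semi} are in hand; the real content sits in the Veronese step. Justifying that $\mathcal{R}_s(I)$ is Noetherian for squarefree monomial ideals (so that a standard-graded Veronese exists) and invoking the normal / Cohen--Macaulay depth-limit machinery for the ordinary powers of $I^{(d)}$ is the point at which the alternative-proof narrative must still lean on existing structural results for symbolic Rees algebras.
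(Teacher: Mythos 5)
Your proposal is correct and follows essentially the same route as the paper: the same interval-covering argument based on Theorem~\ref{depsym} (with the single leftover exponent $m_0^2-m_0$ handled by Corollary~\ref{semi}) to get convergence, $\min=\lim$, and the bound ${\rm dstab}_s(I)\le\max\{1,m_0^2-m_0\}$, followed by the same reduction to a power $I^{(c)}$ with $(I^{(c)})^k=I^{(ck)}$, normality of $I^{(c)}$, the limit-depth formula for normal ideals, and the identification $\ell(I^{(c)})=\ell_s(I)$ via the (Veronese, i.e.\ integral) extension of fiber rings. No gaps worth flagging.
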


\begin{proof}
Set $t={\rm dmin}_s(I)$ and suppose that $m={\rm depth}(S/I^{(t)})$. Thus,$$m=\min_k{\rm depth}(S/I^{(k)}).$$If $t=1$, by Corollary \ref{semi}, for every integer $k\geq 1$ we have ${\rm depth}(S/I^{(k)})=m$. Therefore, in this case the sequence $\{{\rm depth}(S/I^{(k)})\}_{k=1}^{\infty}$ is convergent,$$\min_k{\rm depth}(S/I^{(k)})=\lim_{k\rightarrow \infty}{\rm depth}(S/I^{(k)})=m,$$and ${\rm dstab}_s(I)=1$. Now, assume that $t\geq 2$. Again by Corollary \ref{semi}, we have ${\rm depth}(S/I^{(t^2-t)})=m$. For every integer $k > t^2-t$, we write $k=st+j$, where $s$ and $j$ are positive integers and $1\leq j\leq t$. As $k > t^2-t$, we conclude that $s\geq t-1$. It then follows from Theorem \ref{depsym} that$${\rm depth}(S/I^{(k)})={\rm depth}(S/I^{(st+j)})\leq {\rm depth}(S/I^{(t)})=m.$$By the choice of $m$, we conclude that for every integer $k\geq t^2-t$, the equality ${\rm depth}(S/I^{(k)})=m$ holds. Therefore, the sequence $\{{\rm depth}(S/I^{(k)})\}_{k=1}^{\infty}$ is convergent,$$\min_k{\rm depth}(S/I^{(k)})=m=\lim_{k\rightarrow \infty}{\rm depth}(S/I^{(k)})$$ and ${\rm dstab}_s(I)\leq t^2-t$.

It remains to show that$$\lim_{k\rightarrow \infty}{\rm depth}(S/I^{(k)})= n-\ell_s(I).$$By \cite[Theorem 3.2]{hht}, the ring $A=\mathcal{R}_s(I)=\bigoplus_{k=0}^{\infty}I^{(k)}$ is a finitely generated $\mathbb{K}$-algebra. Hence, using \cite[Theorem 2.1]{hht}, we conclude that there exists an integer $c\geq 1$ such that $I^{(ck)}=(I^{(c)})^k$, for every $k\geq 1$. It follows from \cite[Corollary 7.3.15]{v'} that $I^{(c)}$ is a normal ideal and thus, \cite[Proposition 10.3.2]{hh'} implies that$$\lim_{k\rightarrow \infty}{\rm depth}(S/I^{(k)})=\lim_{k\rightarrow \infty}{\rm depth}(S/I^{(ck)})=\lim_{k\rightarrow \infty}{\rm depth}\big(S/(I^{(c)})^k\big)=n-\ell(I^{(c)}).$$Therefore, we only need to show that $\ell(I^{(c)})=\ell_s(I)$. Set$$B=\mathcal{R}_s(I)/\mathfrak{m}\mathcal{R}_s(I)=\bigoplus_
{k=0}^{\infty}I^{(k)}/\mathfrak{m}I^{(k)}$$ and $$B'=\mathcal{R}(I^{(c)})/\mathfrak{m}\mathcal{R}(I^{(c)})=\bigoplus_
{k=0}^{\infty}I^{(ck)}/\mathfrak{m}I^{(ck)}.$$For every integer $k\geq 0$ and every monomial $u\in I^{(k)}$, we have $u^c\in I^{(ck)}$. This shows that $B$ is an integral extension of $B'$. Therefore, using \cite[Theorem 2.2.5]{hs}, we deduce that$$\ell_s(I)={\rm dim} B= {\rm dim} B'=\ell(I^{(c)}).$$This completes the proof.
\end{proof}


\section{Stanley depth of symbolic powers} \label{sec3}

In this section, we study the Stanley depth of symbolic powers of squarefree monomial ideals. The first main result of this section, Theorem \ref{sdepsym}, states that if $m$ and $k$ are positive integers, then for every integer $j$ with $m-k\leq j\leq m$, we have$${\rm sdepth}(I^{(m)})\geq {\rm sdepth}(I^{(km+j)}) \ \ \ \ \ \ \ \ {\rm and} \ \ \ \ \ \ \ \ {\rm sdepth}(S/I^{(m)})\geq {\rm sdepth}(S/I^{(km+j)}).$$In order to prove the above inequalities we need the following result from \cite{s4}.

\begin{lem} \label{scand}
{\rm (}\cite[Theorem 2.1]{s4}{\rm )} Let $I$ and $J$ be monomial ideals in $S$. Assume that there exists a function $\varphi: {\rm Mon}(S)\rightarrow{\rm Mon}(S)$, such that the following conditions are satisfied.
\begin{itemize}
\item[(i)] For every monomial $u\in {\rm Mon}(S)$, $u\in I$ if and only if $\varphi(u)\in J$.
\item[(ii)] For every Stanley space $u\mathbb{K}[Z]\subseteq S$ and every monomial $v\in {\rm Mon}(S)$, we have $v\in u\mathbb{K}[Z]$ if and only if $\varphi(v)\in \varphi(u)\mathbb{K}[Z]$.
\end{itemize}
Then
$${\rm sdepth}(I) \geq {\rm sdepth}(J) \ \ \ \ \ \ \ \ and \ \ \ \ \ \ \ \ {\rm sdepth}(S/I) \geq {\rm sdepth}(S/J).$$
\end{lem}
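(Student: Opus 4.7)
The strategy is to transport an optimal Stanley decomposition of $J$ (respectively $S/J$) through $\varphi^{-1}$ to a Stanley decomposition of $I$ (respectively $S/I$), so that the preimage of each Stanley space appearing in the original is again a Stanley space with the same variable set. The inequalities follow by applying this to decompositions realizing ${\rm sdepth}(J)$ and ${\rm sdepth}(S/J)$.

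The first step is to extract the shape of $\varphi$ from condition (ii). Specializing (ii) with $Z=\emptyset$ gives injectivity of $\varphi$ on ${\rm Mon}(S)$; with $Z=\{x_1,\ldots,x_n\}$ it gives preservation of divisibility $u \mid v \Leftrightarrow \varphi(u)\mid\varphi(v)$; and with $(u,Z)=(1,\{x_i\})$ it forces $\varphi(x_i^a) = C\,x_i^{f_i(a)}$ for $C := \varphi(1)$ and a strictly increasing function $f_i : \mathbb{Z}_{\geq 0} \to \mathbb{Z}_{\geq 0}$ with $f_i(0)=0$. Cross-applying (ii) to the pairs $(x_i^{a_i},\{x_j\})$ for each $i, j$ and iterating on the support size of $v$ then forces the coordinate-wise formula
$$\varphi\!\left(\prod_i x_i^{a_i}\right) \;=\; C \cdot \prod_i x_i^{f_i(a_i)}.$$

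With this explicit form of $\varphi$, fix an optimal Stanley decomposition $\mathcal{D}_J = \bigoplus_i u_i\mathbb{K}[Z_i]$ with ${\rm sdepth}(\mathcal{D}_J) = {\rm sdepth}(J)$ and set $T_i := \varphi^{-1}(u_i\mathbb{K}[Z_i])$. Unpacking $\varphi(v) \in u_i\mathbb{K}[Z_i]$ coordinate by coordinate, for each $j \notin Z_i$ the equation $f_j({\rm deg}_{x_j} v) = {\rm deg}_{x_j}(u_i) - {\rm deg}_{x_j}(C)$ pins ${\rm deg}_{x_j}(v)$ to a unique value by injectivity of $f_j$ (or makes $T_i$ empty if this value lies outside the image of $f_j$), while for each $j \in Z_i$ one only obtains a lower bound on ${\rm deg}_{x_j}(v)$. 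Hence each nonempty $T_i$ is a single Stanley space $w_i\mathbb{K}[Z_i]$ with variable set $Z_i$. Condition (i), together with the direct-sum property of $\mathcal{D}_J$ and injectivity of $\varphi$, gives $I \cap {\rm Mon}(S) = \bigsqcup_i T_i$, so
$$I \;=\; \bigoplus_{i \,:\, T_i \neq \emptyset} w_i\mathbb{K}[Z_i]$$
is a Stanley decomposition of $I$ of Stanley depth at least $\min_i |Z_i| = {\rm sdepth}(J)$. The proof of ${\rm sdepth}(S/I) \geq {\rm sdepth}(S/J)$ runs identically starting from an optimal Stanley decomposition of $S/J$; condition (i) ensures that the lifted Stanley spaces sit outside $I$.

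The main obstacle is the structural step that forces $\varphi$ into its coordinate-wise form. Establishing this requires applying (ii) to a cleverly chosen family of Stanley spaces and cross-comparing to rule out cross-variable dependence in $\varphi$; without the rigidity it provides, naive attempts based only on divisibility preservation and taking gcds of preimages fail to show that the coordinate-wise minimum of two elements of $T_i$ remains in $T_i$. Once the coordinate-wise form is extracted, the rest of the proof reduces to a routine translation of the membership condition $\varphi(v) \in u_i\mathbb{K}[Z_i]$ into explicit constraints on the exponents of $v$, with strict monotonicity of the $f_i$'s doing the work.
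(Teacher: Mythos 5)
The paper does not actually prove this lemma: it is imported verbatim from \cite[Theorem 2.1]{s4}, so there is no internal proof to compare your argument against. Judged on its own, your proof is correct. The essential point you identify is the right one: condition (ii) only speaks about Stanley spaces whose generator lies in the image of $\varphi$, while the spaces $u_i\mathbb{K}[Z_i]$ in an optimal decomposition of $J$ need not be of that form, so one cannot pull them back naively. Your normal form $\varphi\big(\prod_i x_i^{a_i}\big)=\varphi(1)\prod_i x_i^{f_i(a_i)}$, with each $f_i$ strictly increasing and $f_i(0)=0$ (injectivity from $Z=\emptyset$, monotonicity from $Z=\{x_1,\dots,x_n\}$, separation of variables from the cross-applications and induction on the support), is exactly the rigidity needed; after that, each nonempty $T_i=\varphi^{-1}(u_i\mathbb{K}[Z_i])$ is visibly the monomial set of a Stanley space $w_i\mathbb{K}[Z_i]$ with the \emph{same} $Z_i$ (strict monotonicity of $f_j$ is what makes the condition for $j\in Z_i$ an up-set of exponents, and unboundedness of $f_j$ guarantees it is satisfiable), and conditions (i) plus injectivity give the partition of the monomials of $I$, respectively of ${\rm Mon}(S)\setminus I$. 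Two minor points you should state explicitly in a polished write-up: the minimum $\min_i|Z_i|$ in your final decomposition runs only over the indices with $T_i\neq\emptyset$, which only strengthens the inequality; and the degenerate case $I=0$ is covered by the convention ${\rm sdepth}(0)=\infty$. This is the same transport-of-decomposition strategy as the cited source, so nothing here is a detour; the proof stands.
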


The following theorem is the first main result of this section. It is a consequence of Lemmata \ref{power} and \ref{scand}.

\begin{thm} \label{sdepsym}
Let $I\subset S$ be a squarefree monomial ideal. Suppose that $m$ and $k$ are positive integers. Then for every integer $j$ with $m-k\leq j\leq m$, we have$${\rm sdepth}(I^{(m)})\geq {\rm sdepth}(I^{(km+j)}) \ \ \ \ \ \ \ \ and \ \ \ \ \ \ \ \ {\rm sdepth}(S/I^{(m)})\geq {\rm sdepth}(S/I^{(km+j)}).$$
\end{thm}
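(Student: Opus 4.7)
The plan is to apply Lemma \ref{scand} with $I$ replaced by $I^{(m)}$, $J$ replaced by $I^{(km+j)}$, and the map $\varphi\colon{\rm Mon}(S)\to{\rm Mon}(S)$ defined by $\varphi(u)=u^{k+1}$. This is exactly the same map that was used in the proof of Theorem \ref{depsym}, so there is a clear parallelism with the depth statement: condition (i) of Lemma \ref{scand} will come from Lemma \ref{power}, while condition (ii) requires a separate (but easy) verification tailored to the Stanley space setting.

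First I would note that condition (i) of Lemma \ref{scand} asks that $u\in I^{(m)}$ if and only if $\varphi(u)=u^{k+1}\in I^{(km+j)}$. Since $m-k\leq j\leq m$, this is precisely the content of Lemma \ref{power}, so nothing further has to be done here.

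The heart of the argument is verifying condition (ii): for every Stanley space $u\mathbb{K}[Z]\subseteq S$ and every monomial $v\in{\rm Mon}(S)$, one must show that $v\in u\mathbb{K}[Z]$ if and only if $v^{k+1}\in u^{k+1}\mathbb{K}[Z]$. For the forward direction, if $v=uw$ with $w\in\mathbb{K}[Z]$, then $v^{k+1}=u^{k+1}w^{k+1}$ and $w^{k+1}\in\mathbb{K}[Z]$. For the converse, if $v^{k+1}\in u^{k+1}\mathbb{K}[Z]$ then $u^{k+1}\mid v^{k+1}$; comparing exponents of each variable, divisibility of $(k+1)$-th powers of monomials is equivalent to divisibility of the monomials themselves, so $u\mid v$. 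Writing $v=uw$, one has $v^{k+1}=u^{k+1}w^{k+1}$, which forces $w^{k+1}\in\mathbb{K}[Z]$; since $w$ is a monomial, this in turn gives $w\in\mathbb{K}[Z]$, hence $v\in u\mathbb{K}[Z]$.

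With both conditions established, Lemma \ref{scand} immediately yields the two inequalities ${\rm sdepth}(I^{(m)})\geq{\rm sdepth}(I^{(km+j)})$ and ${\rm sdepth}(S/I^{(m)})\geq{\rm sdepth}(S/I^{(km+j)})$. I expect no serious obstacle: the only non-trivial input, Lemma \ref{power}, has already been proved, and the Stanley-space verification is a one-line exponent comparison leveraging the fact that $\varphi$ preserves and reflects monomial divisibility. The reason the same map $u\mapsto u^{k+1}$ handles both the depth and the sdepth case is that in both situations the relevant combinatorics depends only on divisibility among monomials, which $u\mapsto u^{k+1}$ respects bijectively on supports.
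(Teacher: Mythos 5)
Your proposal is correct and follows essentially the same route as the paper: the paper also sets $\varphi(u)=u^{k+1}$, invokes Lemma \ref{power} for condition (i) of Lemma \ref{scand}, and concludes. Your explicit verification of condition (ii) (that $u\mapsto u^{k+1}$ preserves and reflects membership in Stanley spaces, via the exponent comparison) is a detail the paper leaves implicit, and it is carried out correctly.
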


\begin{proof}
Set $\varphi(u)=u^{k+1}$, for all monomials $u\in {\rm Mon}(S)$. By Lemma \ref{power}, for every monomial $u$, we have $u\in I$ if and only if $\varphi(u)\in I^{(km+j)}$. Hence, the assertions follow from Lemma \ref{scand}.
\end{proof}

As an immediate consequences of Theorem \ref{sdepsym}, we obtain the following corollaries.

\begin{cor}
For every squarefree monomial ideal $I\subset S$, we have$${\rm sdepth}(S/I)\geq {\rm sdepth}(S/I^{(2)})\geq {\rm sdepth}(S/I^{(3)})$$and$${\rm sdepth}(I)\geq {\rm sdepth}(I^{(2)})\geq {\rm sdepth}(I^{(3)}).$$
\end{cor}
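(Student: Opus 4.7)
The plan is to derive both chains of inequalities as direct specializations of Theorem \ref{sdepsym}, since the corollary is explicitly advertised as an immediate consequence. No new machinery is needed; the only work is to choose the parameters $m$, $k$, $j$ appropriately so that $km+j$ lands on $2$ and on $3$ in turn, while respecting the admissibility constraint $m-k\leq j\leq m$.

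First I would establish the step from symbolic power $1$ to symbolic power $2$. Applying Theorem \ref{sdepsym} with $m=1$, $k=1$, and $j=1$, the hypothesis $m-k\leq j\leq m$ reads $0\leq 1\leq 1$, which is satisfied, and $km+j=2$. The theorem then yields
\[
{\rm sdepth}(I^{(1)})\geq {\rm sdepth}(I^{(2)}) \quad\text{and}\quad {\rm sdepth}(S/I^{(1)})\geq {\rm sdepth}(S/I^{(2)}),
\]
which is the first half of each chain (noting $I^{(1)}=I$).

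Next I would handle the step from $2$ to $3$. Applying Theorem \ref{sdepsym} with $m=2$, $k=1$, and $j=1$, the admissibility condition becomes $1\leq 1\leq 2$, which holds, and $km+j=3$. The theorem gives
\[
{\rm sdepth}(I^{(2)})\geq {\rm sdepth}(I^{(3)}) \quad\text{and}\quad {\rm sdepth}(S/I^{(2)})\geq {\rm sdepth}(S/I^{(3)}),
\]
completing both chains. Concatenating the two steps produces exactly the asserted corollary.

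There is no genuine obstacle here: the whole content is parameter bookkeeping, and Theorem \ref{sdepsym} does all the real work via Lemmata \ref{power} and \ref{scand}. The only thing to double-check is that the admissibility window $m-k\leq j\leq m$ is non-vacuous in the two instances used, which I verified above.
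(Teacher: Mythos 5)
Your parameter choices are valid ($m=1,k=1,j=1$ gives $km+j=2$ with $0\leq 1\leq 1$; $m=2,k=1,j=1$ gives $km+j=3$ with $1\leq 1\leq 2$), and this is exactly the intended "immediate consequence" of Theorem \ref{sdepsym} — the paper supplies no further argument. The proof is correct and takes the same route as the paper.
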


In the following theorem, which is the second main result of this section, we prove that the Stanley depth of high symbolic powers of a squarefree monomial ideal is constant. Moreover, we show that this constant number is equal to the minimum value of the Stanley depth of symbolic powers.

\begin{thm} \label{sdmain}
For every squarefree monomial ideal $I$, the sequences $\{{\rm sdepth}(S/I^{(k)})\}_{k=1}^{\infty}$ and $\{{\rm sdepth}(I^{(k)})\}_{k=1}^{\infty}$ are convergent. Moreover,$$\min_k{\rm sdepth}(S/I^{(k)})=\lim_{k\rightarrow \infty}{\rm sdepth}(S/I^{(k)}),$$and$$\min_k{\rm sdepth}(I^{(k)})=\lim_{k\rightarrow \infty}{\rm sdepth}(I^{(k)}).$$
\end{thm}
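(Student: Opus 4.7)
The plan is to mirror the proof of Theorem \ref{dmain}, replacing Theorem \ref{depsym} by Theorem \ref{sdepsym} throughout. Since Theorem \ref{sdepsym} supplies the corresponding inequalities for both ${\rm sdepth}(I^{(\cdot)})$ and ${\rm sdepth}(S/I^{(\cdot)})$ simultaneously, it suffices to run the argument once; the two sequences are handled by the same computation with the obvious substitutions. The step in Theorem \ref{dmain} that identified the limit with $n - \ell_s(I)$ via a normal-Rees-ring argument has no immediate Stanley-depth analog, so it is dropped.

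First, I would record the Stanley-depth analog of Corollary \ref{semi}: for every pair of positive integers $a, b$, one has ${\rm sdepth}(S/I^{(ab)}) \leq {\rm sdepth}(S/I^{(a)})$ and ${\rm sdepth}(I^{(ab)}) \leq {\rm sdepth}(I^{(a)})$. Both are immediate from Theorem \ref{sdepsym} applied with parameters $(m, k, j) = (a, b-1, a)$, since $a - (b-1) \leq a \leq a$. Now since the Stanley depth takes values in a bounded-below set of integers, $\mu := \min_k {\rm sdepth}(S/I^{(k)})$ is well-defined; let $t$ denote the smallest index at which this minimum is attained.

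If $t = 1$, applying Theorem \ref{sdepsym} with parameters $(m, k, j) = (1, k, 1)$ (note $1 - k \leq 1 \leq 1$ for any $k \geq 1$) gives ${\rm sdepth}(S/I) \geq {\rm sdepth}(S/I^{(k+1)})$ for every $k \geq 1$, and the minimality of ${\rm sdepth}(S/I) = \mu$ forces equality throughout, so the sequence is constant at $\mu$. If $t \geq 2$, the analog of Corollary \ref{semi} yields ${\rm sdepth}(S/I^{(t^2 - t)}) \leq {\rm sdepth}(S/I^{(t)}) = \mu$, whence equality by minimality. For any integer $k > t^2 - t$, write $k = st + j$ with $1 \leq j \leq t$; the bound $k > t^2 - t$ forces $s \geq t - 1$, so $t - s \leq 1 \leq j \leq t$, and Theorem \ref{sdepsym} (with $t$ in place of $m$ and $s$ in place of $k$) gives ${\rm sdepth}(S/I^{(k)}) = {\rm sdepth}(S/I^{(st+j)}) \leq {\rm sdepth}(S/I^{(t)}) = \mu$, hence equality by minimality. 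Thus ${\rm sdepth}(S/I^{(k)}) = \mu$ for every $k \geq t^2 - t$, which gives convergence with $\lim = \min = \mu$. The identical argument with $I^{(k)}$ in place of $S/I^{(k)}$ settles the second sequence. There is no serious obstacle; the only care required is the arithmetic bookkeeping $s \geq t - 1$ that makes the interval condition $m - k \leq j \leq m$ of Theorem \ref{sdepsym} hold in the main case.
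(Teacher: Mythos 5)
Your proposal is correct and follows essentially the same route as the paper: it uses Theorem \ref{sdepsym} to get the multiplicative inequality ${\rm sdepth}(S/I^{(ab)})\leq {\rm sdepth}(S/I^{(a)})$ (which the paper instead quotes from \cite[Theorem 3.1]{s4}), and then runs exactly the paper's stabilization argument with $t={\rm sdmin}_s(S/I)$, the division $k=st+j$, and the bound $s\geq t-1$. The parameter checks ($t-s\leq 1\leq j\leq t$) are all valid, so there is nothing to add.
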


\begin{proof}
Set$$m:=\min_k{\rm sdepth}(S/I^{(k)}),$$and let $t\geq 1$ be the smallest integer with ${\rm sdepth}(S/I^{(t)})=m$. If $t=1$, by \cite[Theorem 3.1]{s4}, for every integer $k\geq 1$ we have ${\rm sdepth}(S/I^{(k)})=m$. Therefore, in this case the sequence $\{{\rm sdepth}(S/I^{(k)})\}_{k=1}^{\infty}$ is convergent and$$\min_k{\rm sdepth}(S/I^{(k)})=\lim_{k\rightarrow \infty}{\rm sdepth}(S/I^{(k)})=m.$$Now, assume that $t\geq 2$. Again by \cite[Theorem 3.1]{s4}, we have ${\rm sdepth}(S/I^{(t^2-t)})=m$. For every integer $k > t^2-t$, we write $k=st+j$, where $s$ and $j$ are positive integers and $1\leq j\leq t$. As $k > t^2-t$, we conclude that $s\geq t-1$. It then follows from Theorem \ref{sdepsym} that$${\rm sdepth}(S/I^{(k)})={\rm sdepth}(S/I^{(st+j)})\leq {\rm sdepth}(S/I^{(t)})=m.$$By the choice of $m$, we conclude that for every integer $k\geq t^2-t$, the equality ${\rm sdepth}(S/I^{(k)})=m$ holds. Therefore, the sequence $\{{\rm sdepth}(S/I^{(k)})\}_{k=1}^{\infty}$ is convergent and$$\min_k{\rm sdepth}(S/I^{(k)})=m=\lim_{k\rightarrow \infty}{\rm sdepth}(S/I^{(k)}).$$

By using a similar argument, one can prove that the sequence $\{{\rm sdepth}(I^{(k)})\}_{k=1}^{\infty}$ is convergent and$$\min_k{\rm sdepth}(I^{(k)})=\lim_{k\rightarrow \infty}{\rm sdepth}(I^{(k)}).$$
\end{proof}

As an immediate consequence of the proof of Theorem \ref{sdmain}, we obtain the following result, which provides an upper bound for ${\rm sdstab}_s(S/I)$ (resp. ${\rm sdstab}_s(I)$) in terms of ${\rm sdmin}_s(S/I)$ (resp. ${\rm sdmin}_s(I)$).

\begin{cor} \label{sstabmin}
For every squarefree monomial ideal $I\subset S$, we have$${\rm sdstab}_s(I)\leq \max\{1, {\rm sdmin}_s(I)^2-{\rm sdmin}_s(I)\}$$and$${\rm sdstab}_s(S/I)\leq \max\{1, {\rm sdmin}_s(S/I)^2-{\rm sdmin}_s(S/I)\}.$$
\end{cor}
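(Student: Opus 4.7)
The plan is to observe that the proof of Theorem~\ref{sdmain} has already done all the work, so the corollary should follow by simply reading off the implicit stability bound from that argument. In that proof one sets $m = \min_k {\rm sdepth}(S/I^{(k)})$ and introduces the smallest integer $t\geq 1$ with ${\rm sdepth}(S/I^{(t)}) = m$; since Theorem~\ref{sdmain} identifies this $m$ with $\lim_{k\to\infty}{\rm sdepth}(S/I^{(k)})$, the integer $t$ is exactly ${\rm sdmin}_s(S/I)$. The analogous statement for the sequence $\{{\rm sdepth}(I^{(k)})\}_{k=1}^{\infty}$ identifies the corresponding threshold with ${\rm sdmin}_s(I)$.

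The proof of Theorem~\ref{sdmain} extracts two facts. When $t=1$ it shows ${\rm sdepth}(S/I^{(k)}) = m$ for every $k\geq 1$, whence ${\rm sdstab}_s(S/I)=1$. When $t\geq 2$ it establishes ${\rm sdepth}(S/I^{(k)}) = m$ for every $k\geq t^2-t$, by writing $k = st+j$ with $1\leq j\leq t$ and noting that $k > t^2-t$ forces $s \geq t-1$, so the hypothesis $t-s \leq j \leq t$ of Theorem~\ref{sdepsym} (with that theorem's $m$ and $k$ taken to be $t$ and $s$) is automatically satisfied; Theorem~\ref{sdepsym} then gives ${\rm sdepth}(S/I^{(k)}) \leq {\rm sdepth}(S/I^{(t)}) = m$, and the reverse inequality follows from the minimality of $m$. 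Combining the two cases yields ${\rm sdstab}_s(S/I) \leq \max\{1, t^2-t\}$, which is the second inequality of the corollary. The first inequality is obtained by the identical argument applied to $\{{\rm sdepth}(I^{(k)})\}_{k=1}^{\infty}$, using the $I^{(m)}$-half of Theorem~\ref{sdepsym} together with the quantity ${\rm sdmin}_s(I)$ in place of ${\rm sdmin}_s(S/I)$.

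I do not foresee a genuine obstacle here: the corollary is a bookkeeping consequence of the proof of Theorem~\ref{sdmain}. The only point worth a second glance is verifying that every $k\geq t^2-t$ admits a decomposition $k=st+j$ with $1\leq j\leq t$ and $s\geq t-1$, which is immediate from elementary arithmetic, so that the hypotheses of Theorem~\ref{sdepsym} apply uniformly across the relevant range of $k$.
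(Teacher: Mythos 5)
Your proposal is correct and is exactly the argument the paper intends: the corollary is stated as an immediate consequence of the proof of Theorem \ref{sdmain}, and you correctly identify the threshold $t$ in that proof with ${\rm sdmin}_s(S/I)$ (resp.\ ${\rm sdmin}_s(I)$) and read off the bound $\max\{1,t^2-t\}$ from the two cases $t=1$ and $t\geq 2$. The arithmetic check that every $k>t^2-t$ admits a decomposition $k=st+j$ with $1\leq j\leq t$ and $s\geq t-1$ is exactly the point the paper's proof relies on, and you verify it correctly.
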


Inspired by Theorem \ref{sdmain}, it is natural to ask about the limit values of the sequences $\{{\rm sdepth}(S/I^{(k)})\}_{k=1}^{\infty}$ and $\{{\rm sdepth}(I^{(k)})\}_{k=1}^{\infty}$. In general, we do not know how to compute these limits. However, in Theorem \ref{mat}, we investigate this question for the case that $I$ is the Stanley-Reisner ideal of a matroid. We first need the following lemma.

\begin{lem} \label{colo}
Let $I\subset S$ be an unmixed squarefree monomial ideal of height $h$. Then for every $k\geq 1$, we have
\begin{itemize}
\item[(i)] $(I^{(k)}: x_1x_2\ldots x_n)=S$, if $k\leq h$, and
\item[(ii)] $(I^{(k)}: x_1x_2\ldots x_n)=I^{(k-h)}$, if $k> h$.
\end{itemize}
\end{lem}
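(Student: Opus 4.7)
The plan is to reduce everything to the primary decomposition $I^{(k)}=\mathfrak{p}_1^k\cap\cdots\cap\mathfrak{p}_r^k$ (valid for squarefree monomial ideals by the formula recalled in Section~\ref{sec2}) and then to compute each colon factor combinatorially. Since $I$ is unmixed of height $h$, each prime $\mathfrak{p}_i$ in its irredundant primary decomposition is generated by exactly $h$ of the variables.

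I would first invoke the distributivity of colon over finite intersection to write
$$(I^{(k)}:x_1x_2\cdots x_n)=\bigcap_{i=1}^{r}(\mathfrak{p}_i^k:x_1x_2\cdots x_n),$$
which reduces the problem to computing $(\mathfrak{p}_i^k:x_1x_2\cdots x_n)$ for a single prime $\mathfrak{p}_i$ of height $h$. For any monomial $u$, membership $u\in\mathfrak{p}_i^k$ is equivalent to $\sum_{x_j\in\mathfrak{p}_i}\deg_{x_j}(u)\geq k$. Multiplying $u$ by $x_1\cdots x_n$ adds $1$ to each $\deg_{x_j}$, so $u\cdot x_1\cdots x_n\in\mathfrak{p}_i^k$ is equivalent to $\sum_{x_j\in\mathfrak{p}_i}\deg_{x_j}(u)\geq k-h$.

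Part (i) then follows immediately: when $k\leq h$ the inequality $\sum_{x_j\in\mathfrak{p}_i}\deg_{x_j}(u)\geq k-h$ is automatic for every monomial $u$, hence $(\mathfrak{p}_i^k:x_1x_2\cdots x_n)=S$ for every $i$, and the intersection is $S$. For part (ii), when $k>h$ the inequality says exactly that $u\in\mathfrak{p}_i^{k-h}$, so $(\mathfrak{p}_i^k:x_1x_2\cdots x_n)=\mathfrak{p}_i^{k-h}$, and therefore $(I^{(k)}:x_1x_2\cdots x_n)=\bigcap_i\mathfrak{p}_i^{k-h}=I^{(k-h)}$.

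I do not anticipate a serious obstacle here; the whole argument is essentially bookkeeping on exponent vectors, using that $\mathfrak{p}_i$ is generated by a subset of the variables so that powers admit an easy combinatorial description. The only point meriting explicit care is the distributivity of the colon over the intersection, which is justified directly at the level of monomials: a monomial $u$ lies in every $(\mathfrak{p}_i^k:x_1\cdots x_n)$ iff $u\cdot x_1\cdots x_n$ lies in every $\mathfrak{p}_i^k$, iff $u\cdot x_1\cdots x_n\in I^{(k)}$.
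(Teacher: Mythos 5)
Your proof is correct and follows essentially the same route as the paper: the paper also writes $(I^{(k)}:x_1\cdots x_n)=\bigcap_{i=1}^r(\mathfrak{p}_i^k:x_1\cdots x_n)$ and then quotes the fact that $(\mathfrak{p}^k:x_1\cdots x_n)$ equals $S$ for $k\leq h$ and $\mathfrak{p}^{k-h}$ for $k>h$ when $\mathfrak{p}$ is a monomial prime of height $h$. Your exponent-vector bookkeeping simply supplies the verification of that fact, which the paper leaves implicit.
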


\begin{proof}
Let $I=\frak{p}_1\cap\ldots\cap\frak{p}_r,$ be the irredundant primary decomposition of $I$. Then$$(I^{(k)}: x_1x_2\ldots x_n)=\bigcap_{i=1}^r(\frak{p}_i^k: x_1x_2\ldots x_n).$$The assertions now follow from the fact the for every monomial prime ideal $\frak{p}$ of height $h$, we have $(\frak{p}^k: x_1x_2\ldots x_n)=S$, if $k\leq h$, and $(\frak{p}^k: x_1x_2\ldots x_n)=\frak{p}^{(k-h)}$, if $k> h$.
\end{proof}

In the following theorem, we study the Stanley depth of high symbolic powers of the Stanley-Reisner ideal of a matroid.

\begin{thm} \label{mat}
Let $\Delta$ be a matroid. Then$$\lim_{k\rightarrow \infty}{\rm sdepth}(S/I_{\Delta}^{(k)})=n-\ell_s(I_{\Delta})=\dim \Delta+1$$and$$\lim_{k\rightarrow \infty}{\rm sdepth}(I_{\Delta}^{(k)})\geq n-\ell_s(I_{\Delta})+1.$$
\end{thm}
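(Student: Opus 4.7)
The plan is to split the theorem into two assertions and handle them in turn, both by exploiting a clean Stanley decomposition of $S/I_\Delta^{(k)}$ coming from the matroid structure of $\Delta$.

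For the first statement, I would begin by invoking the theorem of Minh--Trung and Varbaro that, for a matroid $\Delta$, the quotient $S/I_\Delta^{(k)}$ is Cohen--Macaulay for every $k\geq 1$. Combined with Theorem \ref{dmain} this immediately gives the second equality $n-\ell_s(I_\Delta)=\dim\Delta+1$. To identify the Stanley depth, I would show that $S/I_\Delta^{(k)}$ is in fact a clean module in the sense recalled at the end of Section \ref{sec2}. A clean prime filtration of $S/I_\Delta^{(k)}$ yields a Stanley decomposition $\bigoplus_j u_j\mathbb{K}[Z_j]$ whose factor primes all lie in ${\rm Min}(I_\Delta)$. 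Since every minimal prime of $I_\Delta$ has the same height $n-\dim\Delta-1$ by purity of matroids, each Stanley space has $|Z_j|=\dim\Delta+1$, so ${\rm sdepth}(S/I_\Delta^{(k)})\geq\dim\Delta+1$. The reverse inequality ${\rm sdepth}(S/I)\leq\dim(S/I)$ is standard, so equality holds for every $k$ and hence in the limit.

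For the second statement, the plan is to bootstrap from Part~1 using Lemma \ref{colo}. Setting $h=n-\dim\Delta-1$ and choosing $k>h$, the identity $(I_\Delta^{(k)}:x_1\cdots x_n)=I_\Delta^{(k-h)}$ induces a $\mathbb{K}$-vector space splitting of $I_\Delta^{(k)}$ into the submodule $x_1\cdots x_n\,I_\Delta^{(k-h)}$ and the span of those monomials in $I_\Delta^{(k)}$ which miss at least one variable. On the first summand, the clean decomposition of $S/I_\Delta^{(k-h)}$ from Part~1 lifts to a Stanley decomposition in which every Stanley space naturally acquires an additional free variable, boosting its dimension to $\dim\Delta+2$. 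On the second summand an analogous argument proceeds by induction on the number of variables involved. Concatenating the two decompositions produces a Stanley decomposition of $I_\Delta^{(k)}$ whose minimum piece has dimension at least $\dim\Delta+2=n-\ell_s(I_\Delta)+1$.

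The main obstacle is proving cleanness of $S/I_\Delta^{(k)}$ for $k\geq 2$. Cleanness for $k=1$ is classical because matroids are shellable, but extending it to higher symbolic powers requires polarizing $I_\Delta^{(k)}$ to a squarefree monomial ideal, constructing a shelling on the polarized complex by exploiting a suitable lift of the matroid exchange axiom (which does not pass automatically to the polarization), and then transporting the resulting clean filtration back through the de-polarization. Once this cleanness is in hand, both assertions of the theorem follow essentially formally from Theorem \ref{dmain} and Lemma \ref{colo}.
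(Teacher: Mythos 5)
Your first part is essentially the paper's argument: Cohen--Macaulayness of $S/I_{\Delta}^{(k)}$ plus Theorem \ref{dmain} identifies the depth, and cleanness forces ${\rm sdepth}(S/I_{\Delta}^{(k)})={\rm depth}(S/I_{\Delta}^{(k)})$. The one caveat is that you treat cleanness of $S/I_{\Delta}^{(k)}$ for $k\geq 2$ as "the main obstacle" and sketch a polarization-and-shelling program without carrying it out; this is in fact a known theorem (Bandari--Soleyman Jahan, cited as \cite[Theorem 2.1]{bs} in the paper), so the honest options are to cite it or to actually prove it --- your sketch, which hinges on lifting the exchange axiom to the polarization, is not a proof and is where that known result does real work.

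The second part has a genuine gap. Your splitting of $I_{\Delta}^{(k)}$ into $x_1\cdots x_n\,I_{\Delta}^{(k-h)}$ and the span of monomials missing some variable is fine as a vector-space decomposition, but the claim that "the clean decomposition of $S/I_{\Delta}^{(k-h)}$ from Part 1 lifts to a Stanley decomposition in which every Stanley space naturally acquires an additional free variable" does not work: a Stanley decomposition of the \emph{quotient} $S/I_{\Delta}^{(k-h)}$ decomposes the monomials \emph{outside} $I_{\Delta}^{(k-h)}$, so multiplying it by $x_1\cdots x_n$ produces a decomposition of the wrong set, and in any case multiplying a Stanley space $u\mathbb{K}[Z]$ by a monomial does not increase $|Z|$. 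What you actually need on that summand is the inequality ${\rm sdepth}(I_{\Delta}^{(k-h)})\geq\dim\Delta+2$ for the \emph{ideal}, i.e.\ the statement being proved at a smaller exponent --- so the argument must be set up as an induction (the paper inducts on $n+k$), with base cases at $k\leq h$ (where Lemma \ref{colo} gives $I_n'=S$) and at $k=1$ (where the paper uses vertex decomposability of matroids and the bound ${\rm sdepth}(I_\Delta)\geq{\rm depth}(I_\Delta)$ from \cite{ks}); note also that since $\lim_k{\rm sdepth}(I^{(k)})=\min_k{\rm sdepth}(I^{(k)})$ by Theorem \ref{sdmain}, you need the bound for \emph{all} $k$, not just $k>h$. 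Finally, "an analogous argument by induction on the number of variables" on the second summand is precisely the hard step you have skipped: the paper peels off one variable at a time via $I_i'=(I_{i-1}':x_i)$ and $I_i=I_{i-1}'\cap S_i$, uses Popescu's inequality ${\rm sdepth}(J:v)\geq{\rm sdepth}(J)$ for the colon pieces, and applies the induction hypothesis to $I_{\Gamma_i}^{(k)}$ where $\Gamma_i={\rm del}_\Delta(i)$ --- which requires knowing that deletions of matroids are matroids, that $I_{\Delta}^{(k)}\cap S_i=I_{\Gamma_i}^{(k)}S_i$, and a separate treatment of the case where some vertex lies in every facet (so that $\dim\Gamma_i=\dim\Delta$ otherwise). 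None of these ingredients appears in your outline, and without them the induction does not close.
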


\begin{proof}
By \cite[Theorem 2.1]{v1} (see also \cite[Theorem 1.1]{tt1}), we know that $S/I_{\Delta}^{(k)}$ is Cohen-Macaulay, for any integer $k\geq 1$. Hence, by Theorem \ref{dmain}, we have$$n-\ell_s(I_{\Delta})=\lim_{k\rightarrow \infty}{\rm depth}(S/I_{\Delta}^{(k)})=\lim_{k\rightarrow \infty}\dim S/I_{\Delta}^{(k)}=\dim S/I_{\Delta}=\dim \Delta+1.$$Also, we know from \cite[Theorem 2.1]{bs} that for every integer $k\geq 1$, the module $S/I_{\Delta}^{(k)}$ is a clean module. Thus, by \cite[Page 14]{h}, we have$${\rm sdepth}(S/I_{\Delta}^{(k)})={\rm depth}(S/I_{\Delta}^{(k)}),$$for every integer $k\geq 1$. This implies that$$\lim_{k\rightarrow \infty}{\rm sdepth}(S/I_{\Delta}^{(k)})=n-\ell_s(I_{\Delta})=\dim \Delta+1.$$

In order to complete the proof, it is enough to prove that
\[
\begin{array}{rl}
{\rm sdepth}(I_{\Delta}^{(k)})\geq \dim \Delta+2,
\end{array}
\]
for every integer $k\geq 1$.

Assume that $V(\Delta)=[n]$. We prove the above inequality by induction on $n+k$. We first note that if $\Delta$ is a simplex, then $I_{\Delta}=0$ and thus, ${\rm sdepth}(I_{\Delta}^{(k)})=\infty\geq \dim \Delta+2$, for every $k\geq 1$. In particular, the assertion is trivial for $n=1$. Suppose $k=1$. It is well-known that $\Delta$ is a vertex decomposable simplicial complex (see e.g., \cite[Page 2256]{nr}). Hence, using \cite[Theorem 4.2]{ks}, we conclude that$${\rm sdepth}(I_{\Delta})\geq {\rm depth}(I_{\Delta})={\rm depth}(S/I_{\Delta})+1=\dim S/I_{\Delta}+1=\dim \Delta+2.$$Thus, suppose that $n,k\geq 2$ and $\Delta$ is not a simplex.

For every integer $i$ with $i\leq i\leq n$, let $S_i=\mathbb{K}[x_1, \ldots, x_{i-1}, x_{i+1}, \ldots, x_n]$ be the polynomial ring obtained from $S$ by deleting the variable $x_i$ and set $\Gamma_i:={\rm del}_{\Delta}(i)$. If $\Delta$ has a vertex $i$ which belongs to every facet of $\Delta$, then $I_{\Delta}=I_{\Gamma_i}S$. Since $\Gamma_i={\rm del}_{\Delta}(i)$ is a matroid, by \cite[Lemma 3.6]{hvz} and the induction hypothesis, we conclude that
\begin{align*}
{\rm sdepth}(I_{\Delta}^{(k)}) & ={\rm sdepth}(I_{\Gamma_i}^{(k)}S)={\rm sdepth}_{S_i}(I_{\Gamma_i}^{(k)}S_i)+1 \geq \dim \Gamma_i+2+1\\ & =\dim \Delta-1+2+1=\dim \Delta+2.
\end{align*}
Hence, we assume the $\Delta$ has no vertex which belongs to every facet of $\Delta$. This implies that ${\rm dim}\Gamma_i={\rm dim}\Delta$, for every $i\in [n]$.

Consider the ideals $I_1=I_{\Delta}^{(k)}\cap S_1$ and $I_1'=(I_{\Delta}^{(k)}:x_1)$. Now $I_{\Delta}^{(k)}=I_1\oplus x_1I_1'$ and therefore by definition of  the Stanley depth we have
\[
\begin{array}{rl}
{\rm sdepth}(I_{\Delta}^{(k)})\geq \min \{{\rm sdepth}_{S_1}(I_1), {\rm sdepth}_S(I_1')\}.
\end{array} \tag{2} \label{3}
\]

Notice that $I_1=(I_{\Delta}\cap S_1)^{(k)}=I_{\Gamma_1}^{(k)}S_1$. Thus, it follows from the induction hypothesis that
$${\rm sdepth}_{S_1}(I_1)\geq \dim \Gamma_1+2=\dim \Delta+2.$$
Hence, using the inequality (\ref{3}), it is enough to prove that ${\rm sdepth}_S(I_1')\geq \dim \Delta+2$.

For every integer $i$ with $2\leq i\leq n$, consider the ideals $I_i'=(I_{i-1}':x_i)$ and $I_i=I_{i-1}'\cap S_i$.

{\bf Claim.} For every integer $i$ with $1\leq i\leq n-1$, we have$${\rm sdepth}(I_i')\geq \min \{\dim \Delta+2, {\rm sdepth}(I_{i+1}')\}.$$

{\it Proof of the Claim.} For every integer $i$ with $1\leq i\leq n-1$, we have $I_i'=I_{i+1}\oplus x_{i+1}I_{i+1}'$ and therefore by definition of  the Stanley depth we have
\[
\begin{array}{rl}
{\rm sdepth}(I_i')\geq \min \{{\rm sdepth}_{S_{i+1}}(I_{i+1}), {\rm sdepth}_S(I_{i+1}')\}.
\end{array} \tag{3} \label{4}
\]

Notice that for every integer $i$ with $1\leq i\leq n-1$, we have $I_i'=(I_{\Delta}^{(k)}:x_1x_2\ldots x_i)$. Thus,$$I_{i+1}=I_i'\cap S_{i+1}=((I_{\Delta}^{(k)}\cap S_{i+1}):_{S_{i+1}}x_1x_2\ldots x_i).$$Hence, it follows from \cite[Proposition 2]{p} (see also \cite[Proposition 2.5]{s4}) that

\[
\begin{array}{rl}
{\rm sdepth}_{S_{i+1}}(I_{i+1})\geq {\rm sdepth}_{S_{i+1}}(I_{\Delta}^{(k)}\cap S_{i+1}).
\end{array} \tag{4} \label{5}
\]

On the other hand,$$I_{\Delta}^{(k)}\cap S_{i+1}=(I_{\Delta}\cap S_{i+1})^{(k)}=I_{\Gamma_{i+1}}^{(k)}S_{i+1}.$$Thus, it follows from the induction hypothesis that$${\rm sdepth}_{S_{i+1}}(I_{\Delta}^{(k)}\cap S_{i+1})\geq \dim \Gamma_{i+1}+2=\dim \Delta+2.$$Therefore, the claim follows by inequalities (\ref{4}) and (\ref{5}).

Set $d=\dim \Delta+1$. Since $\Delta$ is pure, we conclude from \cite[Lemma 1.5.4]{hh'} that $I_{\Delta}$ is an unmixed ideal of height $n-d$. We know that $I_n'=(I_{\Delta}^{(k)}:x_1x_2\ldots x_n)$ and hence, Lemma \ref{colo} implies that either $I_n'=S$ or $I_n'=I_{\Delta}^{(k-n+d)}$. In the first case, we have$${\rm sdepth}(I_n')=n\geq \dim \Delta+2,$$(where the inequality follows from the assumption that $\Delta$ is not a simplex) and in the second case, by the induction hypothesis we have$${\rm sdepth}(I_n')\geq \dim \Delta+2.$$Thus, in both cases we have ${\rm sdepth}(I_n')\geq \dim \Delta+2$. Using the claim repeatedly, we conclude that ${\rm sdepth}(I_1')\geq \dim \Delta+2$. This completes the proof of the theorem.
\end{proof}





\end{document}